\documentclass{amsproc}%
\usepackage{amsfonts}
\usepackage{amsmath}
\usepackage{amssymb}
\usepackage{graphicx}%
\setcounter{MaxMatrixCols}{30}
\providecommand{\U}[1]{\protect\rule{.1in}{.1in}}
\theoremstyle{plain}

\newtheorem{corollary}{Corollary}

\newtheorem{lemma}{Lemma}

\newtheorem{remark}{Remark}

\newtheorem{theorem}{Theorem}
\numberwithin{equation}{section}
\begin{document}
\title[Approximate Double Commutants]{Approximate Double Commutants in von Neumann Algebras}
\author{Don Hadwin}
\address{Mathematics Department\\
University of New Hampshire}
\email{don@unh.edu}
\urladdr{http://www.math.unh.edu/\symbol{126}don}
\subjclass[2000]{Primary 46L19; Secondary 46L05}
\keywords{double commutant, approximate double commutant, hyperreflexive}
\dedicatory{Dedicated to Eric Nordgren, a great mathematician and a great friend.}
\begin{abstract}
Richard Kadison showed that not every commutative von Neumann subalgebra of a
factor von Neumann algebra is equal to its relative double commutant. We prove
that every commutative C*-subalgebra of a centrally prime C*-algebra
$\mathcal{B}$ equals its relative approximate double commutant. If
$\mathcal{B}$ is a von Neumann algebra, there is a related distance formula.

\end{abstract}
\maketitle

One of the fundamental results in the theory of von Neumann algebras is von
Neumann's classical \emph{double commutant theorem}, which says that if
$\mathcal{S=S}^{\ast}\subseteq B\left(  H\right)  $, then $\mathcal{S}%
^{\prime\prime}=W^{\ast}\left(  \mathcal{S}\right)  $. In 1978 \cite{H1} the
author proved an asymptotic version of von Neumann's theorem, the
\emph{approximate double commutant theorem}. For the asymptotic version, we
define the \emph{approximate double commutant} of $\mathcal{S}\subseteq
B\left(  H\right)  $, denoted by Appr$\left(  S\right)  ^{\prime\prime}$, to
be the set of all operators $T$ such that
\[
\left\Vert A_{\lambda}T-TA_{\lambda}\right\Vert \rightarrow0
\]
for every bounded net $\left\{  A_{\lambda}\right\}  $ in $B\left(  H\right)
$ for which
\[
\left\Vert A_{\lambda}S-SA_{\lambda}\right\Vert \rightarrow0
\]
for every $S\in\mathcal{S}$. More generally, if $\mathcal{B}$ is a unital
C*-algebra and $\mathcal{S}\subseteq\mathcal{B}$, we define the \emph{relative
approximate double commutant of }$S$\emph{ in }$\mathcal{B}$, denoted by
Appr$\left(  S,\mathcal{B}\right)  ^{\prime\prime}$, in the same way but
insisting that the $T$'s and the $A_{\lambda}$'s be in $\mathcal{B}$. The
approximate double commutant theorem in $B\left(  H\right)  $ \cite{H1} says
that if $\mathcal{S=S}^{\ast},$ then Appr$\left(  \mathcal{S}\right)
^{\prime\prime}=C^{\ast}\left(  \mathcal{S}\right)  $. Moreover, if we
restrict the $\left\{  A_{\lambda}\right\}  $'s to be nets of unitaries or
nets of projections that asymptotically commute with every element of
$\mathcal{S}$, the resulting approximate double commutant is still $C^{\ast
}\left(  \mathcal{S}\right)  $.

A von Neumann algebra $\mathcal{B}$ is \emph{hyperreflexive} if there is a
constant $K\geq1$ such that, for every $T\in B\left(  H\right)  $%
\[
dist\left(  T,\mathcal{B}\right)  \leq K\sup\left\{  \left\Vert
TP-PT\right\Vert :P\in\mathcal{B}^{\prime},P\text{ a projection}\right\}  .
\]
The smallest such $K$ is called the \emph{constant of hyperreflexivity} for
$\mathcal{B}$. The inequality%
\[
\sup\left\{  \left\Vert TP-PT\right\Vert :P\in\mathcal{M}^{\prime},P\text{ a
projection}\right\}  \leq dist\left(  T,\mathcal{M}\right)
\]
is always true. The question of whether every von Neumann algebra is
hyperreflexive is still open and is equivalent to a number of other important
problems in von Neumann algebras (see \cite{HP}). It was proved by the author
\cite{H2} that every unital C*-subalgebra $\mathcal{A}$ of $B\left(  H\right)
$ is approximately hyperreflexive; more precisely, if $T\in B\left(  H\right)
$, then there is a net $\left\{  P_{\lambda}\right\}  $ of projections such
that
\[
\left\Vert AP_{\lambda}-P_{\lambda}A\right\Vert \rightarrow0
\]
for every $A\in\mathcal{A}$, and
\[
dist\left(  T,\mathcal{A}\right)  \leq29\lim_{\lambda}\left\Vert TP_{\lambda
}-P_{\lambda}T\right\Vert .
\]

If we replace the role of $B\left(  H\right)  $ with a factor von Neumann
algebra, then the double commutant theorem fails, even when the subalgebra is
commutative. Suppose $\mathcal{S}$ is a subset of a ring $\mathcal{R}$. We
define the \emph{relative commutant} of $\mathcal{S}$ in $\mathcal{R}$, the
\emph{relative double commutant} of $\mathcal{S}$ in $\mathcal{R}$, and the
\emph{relative triple commutant} of $\mathcal{S}$ in $\mathcal{R}$,
respectively, by%
\[
\left(  \mathcal{S},\mathcal{R}\right)  ^{\prime}=\left\{  T\in\mathcal{R}%
:\forall S\in\mathcal{S},TS=ST\right\}  ,
\]%
\[
\left(  \mathcal{S},\mathcal{R}\right)  ^{\prime\prime}=\left\{
T\in\mathcal{R}:\forall A\in\left(  \mathcal{S},\mathcal{R}\right)  ^{\prime
},TA=AT\right\}  ,
\]
and
\[
\left(  \mathcal{S},\mathcal{R}\right)  ^{\prime\prime\prime}=\left\{
T\in\mathcal{R}:\forall A\in\left(  \mathcal{S},\mathcal{R}\right)
^{\prime\prime},TA=AT\right\}  .
\]
It is clear from general Galois nonsense that
\[
\left(  \mathcal{S},\mathcal{R}\right)  ^{\prime\prime\prime}=\left(
\mathcal{S},\mathcal{R}\right)  ^{\prime}.
\]
Following R. Kadison \cite{Kad} we will say a subring $\mathcal{M}$ of a
unital ring $\mathcal{B}$ is \emph{normal} if
\[
\mathcal{M}=\left(  \mathcal{M},\mathcal{B}\right)  ^{\prime\prime}=\left(
\mathcal{M}^{\prime}\cap\mathcal{B}\right)  ^{\prime}\cap\mathcal{B}\text{.}%
\]
R. Kadison \cite{Kad} proved that if $\mathcal{M}$ is type $I$ von Neumann
subalgebra of a von Neumann algebra $\mathcal{B}$, then $\mathcal{M}$ is
normal in $\mathcal{B}$ if and only if its center $\mathcal{Z}\left(
\mathcal{M}\right)  =\mathcal{M\cap M}^{\prime}$ is normal if and only if
$\mathcal{Z}\left(  \mathcal{M}\right)  $ is an intersection of masas (maximal
abelian selfadjoint subalgebras) of $\mathcal{B}$. See the paper of B. J.
Vowden \cite{V} for more examples. We see that the part of Kadison's result
concerning abelian C*-subalgebras is true in the C*-algebraic setting. We
prove general version for rings, which applies to commutative nonselfadjoint
subalgebras of a C*-algebra or von Neumann algebra.

\bigskip

\begin{lemma}
Suppose $\mathcal{M}$ is a unital abelian subring of a unital ring
$\mathcal{B}$. The following are equivalent:

\begin{enumerate}
\item $\mathcal{M}=\left(  \mathcal{M},\mathcal{B}\right)  ^{\prime\prime}$.

\item $\mathcal{M}$ is an intersection of maximal abelian subrings of
$\mathcal{B}$.

\item $\mathcal{M}$ is an intersection of subrings of the form $\left(
\mathcal{S},\mathcal{B}\right)  ^{\prime}$ for subsets $\mathcal{S}$ of
$\mathcal{B}$.
\end{enumerate}
\end{lemma}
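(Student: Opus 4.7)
The plan is to prove the cycle $(2) \Rightarrow (3) \Rightarrow (1) \Rightarrow (2)$, relying throughout on the elementary observation that since $\mathcal{M}$ is abelian we have $\mathcal{M}\subseteq \left(\mathcal{M},\mathcal{B}\right)'$, hence $\mathcal{M}\subseteq \left(\mathcal{M},\mathcal{B}\right)''$ automatically. I would first record the basic fact that if $\mathcal{N}$ is a maximal abelian subring of $\mathcal{B}$ (necessarily containing $1$), then $\left(\mathcal{N},\mathcal{B}\right)' = \mathcal{N}$: indeed if $T\in\left(\mathcal{N},\mathcal{B}\right)'$, the subring generated by $\mathcal{N}\cup\{T\}$ is again abelian, so by maximality it equals $\mathcal{N}$, forcing $T\in\mathcal{N}$.

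The implication $(2)\Rightarrow(3)$ is then immediate, because any maximal abelian subring is itself of the form $\left(\mathcal{S},\mathcal{B}\right)'$ with $\mathcal{S}=\mathcal{N}$. For $(3)\Rightarrow(1)$ I would use a Galois-style swap: write $\mathcal{M} = \bigcap_i \left(\mathcal{S}_i,\mathcal{B}\right)'$, observe that each $\mathcal{S}_i\subseteq\left(\mathcal{M},\mathcal{B}\right)'$, and conclude that any $T\in\left(\mathcal{M},\mathcal{B}\right)''$ commutes with every element of each $\mathcal{S}_i$, so $T\in\bigcap_i\left(\mathcal{S}_i,\mathcal{B}\right)' = \mathcal{M}$. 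Combined with the trivial inclusion above, this gives $(1)$.

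The real content lies in $(1)\Rightarrow(2)$, and this is where I expect the main (though still mild) obstacle. Assuming $\mathcal{M}=\left(\mathcal{M},\mathcal{B}\right)''$, I would construct, for each $T\in\mathcal{B}\setminus\mathcal{M}$, a maximal abelian subring $\mathcal{N}_T$ of $\mathcal{B}$ with $\mathcal{M}\subseteq\mathcal{N}_T$ but $T\notin\mathcal{N}_T$. Since $T\notin\left(\mathcal{M},\mathcal{B}\right)''$, pick $A_T\in\left(\mathcal{M},\mathcal{B}\right)'$ with $A_TT\neq TA_T$. The subring $\mathcal{M}_T$ generated by $\mathcal{M}\cup\{A_T\}$ is abelian because $A_T$ commutes with all of $\mathcal{M}$ and $\mathcal{M}$ is itself commutative. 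A standard Zorn's Lemma argument (the union of an ascending chain of abelian subrings of $\mathcal{B}$ is again an abelian subring) then extends $\mathcal{M}_T$ to a maximal abelian subring $\mathcal{N}_T$ of $\mathcal{B}$. Since $A_T\in\mathcal{N}_T$ and $A_T$ fails to commute with $T$, we have $T\notin\mathcal{N}_T$.

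Finally I would take the intersection: $\mathcal{M}\subseteq\bigcap_{T\notin\mathcal{M}}\mathcal{N}_T$ by construction, and the reverse inclusion holds because for each $T\in\mathcal{B}\setminus\mathcal{M}$ the corresponding $\mathcal{N}_T$ omits $T$. Hence $\mathcal{M}$ is realized as an intersection of maximal abelian subrings, giving $(2)$ and closing the cycle. The only subtlety worth checking carefully is that the subring generated by $\mathcal{M}$ and $A_T$ really is abelian under the working definition of \emph{subring} (i.e., that we are allowed to take sums, products, and the unit), but this is automatic from the setup.
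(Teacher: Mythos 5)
Your proof is correct and follows essentially the same route as the paper: maximal abelian subrings equal their own relative commutants (giving $(2)\Rightarrow(3)$), the Galois correspondence gives $(3)\Rightarrow(1)$, and for $(1)\Rightarrow(2)$ you use an element of $(\mathcal{M},\mathcal{B})'$ that fails to commute with a given $T\notin\mathcal{M}$ to build, via Zorn, a maximal abelian subring containing $\mathcal{M}$ but omitting $T$. The only (immaterial) difference is that you index the separating maximal abelian subrings over all $T\in\mathcal{B}\setminus\mathcal{M}$, whereas the paper first fixes one maximal abelian $\mathcal{W}\supseteq\mathcal{M}$ and only separates points of $\mathcal{W}\setminus\mathcal{M}$.
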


\begin{proof}
First note that every maximal abelian subring $\mathcal{E}$ has the property
that $\mathcal{E}=\left(  \mathcal{E},\mathcal{B}\right)  ^{\prime}$, which
implies $\mathcal{E}=\left(  \mathcal{E},\mathcal{B}\right)  ^{\prime\prime}$
and the implication $\left(  2\right)  \Longrightarrow\left(  3\right)  $. It
is also clear that if $\left\{  \mathcal{S}_{i}:i\in I\right\}  $ is a
collection of nonempty subsets of $\mathcal{B}$, then
\[
\bigcup_{i\in I}\left(  \mathcal{S}_{i},\mathcal{B}\right)  ^{\prime}%
\subseteq\left(  \cap_{i\in I}\mathcal{S}_{i},\mathcal{B}\right)  ^{\prime},
\]
and%
\[
\left(  \cap_{i\in I}\mathcal{S}_{i},\mathcal{B}\right)  ^{\prime\prime
}\subseteq\bigcap_{i\in I}\left(  \mathcal{S}_{i},\mathcal{B}\right)
^{\prime\prime}.
\]
This, and the fact that $\left(  \mathcal{S},\mathcal{B}\right)
^{\prime\prime\prime}=\left(  \mathcal{S},\mathcal{B}\right)  ^{\prime}$
always holds, yields $\left(  3\right)  \Longrightarrow\left(  1\right)  $.

To prove $\left(  1\right)  \Longrightarrow\left(  3\right)  $, suppose
$\left(  1\right)  $ holds, and let $\mathcal{W}$ be a maximal abelian subring
of $\mathcal{B}$ such that $\mathcal{M}\subseteq\mathcal{W}$. For each
$W\in\mathcal{W}\backslash\mathcal{M}$, by $\left(  1\right)  $, there is a
$T_{W}\in\left(  \mathcal{M},\mathcal{B}\right)  ^{\prime}$ such that
$T_{W}W\neq WT_{W}$. Since the ring generated by $\mathcal{M}\cup\left\{
T_{W}\right\}  $ is abelian, it is contained in a maximal abelian subring
$\mathcal{S}_{W}$, and $W\notin\mathcal{S}_{W}$. Hence%
\[
\mathcal{M}=\mathcal{W}\cap\bigcap_{W\in\mathcal{W}\backslash\mathcal{M}%
}\mathcal{S}_{W},
\]
which proves $\left(  2\right)  $ holds.
\end{proof}

\bigskip

If in the statement and proof of the preceding lemma we replace "ring" with
"C*-algebra", we obtain the following result for C*-algebras.

\begin{corollary}
Suppose $\mathcal{M}$ is a unital commutative C*-subalgebra of a unital
C*-algebra $\mathcal{B}$. The following are equivalent:

\begin{enumerate}
\item $\mathcal{M}$ is normal in $\mathcal{B}$.

\item $\mathcal{M}$ is an intersection of masas in $\mathcal{B}$.

\item $\mathcal{M}$ is an intersection of algebras of the form $\left(
\mathcal{S},\mathcal{B}\right)  ^{\prime}$ for subsets $\mathcal{S}$ of
$\mathcal{B}$.
\end{enumerate}
\end{corollary}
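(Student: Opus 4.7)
The plan is to transcribe the proof of the preceding lemma with ``abelian subring'' replaced by ``commutative C*-subalgebra'' and ``maximal abelian subring'' replaced by ``masa,'' paying attention to the one place where the C*-structure (self-adjointness together with norm-closure) must be invoked.

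First I would observe that every masa $\mathcal{E}\subseteq\mathcal{B}$ still satisfies $\mathcal{E}=(\mathcal{E},\mathcal{B})^{\prime}$: if $A\in(\mathcal{E},\mathcal{B})^{\prime}$, then since $\mathcal{E}$ is self-adjoint, taking adjoints in the relations $AE=EA$ shows $A^{\ast}\in(\mathcal{E},\mathcal{B})^{\prime}$, so $C^{\ast}(\mathcal{E}\cup\{A\})$ is a commutative C*-subalgebra of $\mathcal{B}$ containing $\mathcal{E}$, and maximality forces $A\in\mathcal{E}$. This gives $(2)\Longrightarrow(3)$, and the standard inclusions $\bigcup_{i}(\mathcal{S}_{i},\mathcal{B})^{\prime}\subseteq(\cap_{i}\mathcal{S}_{i},\mathcal{B})^{\prime}$ and $(\cap_{i}\mathcal{S}_{i},\mathcal{B})^{\prime\prime}\subseteq\bigcap_{i}(\mathcal{S}_{i},\mathcal{B})^{\prime\prime}$, together with $(\mathcal{S},\mathcal{B})^{\prime\prime\prime}=(\mathcal{S},\mathcal{B})^{\prime}$, deliver $(3)\Longrightarrow(1)$ exactly as in the lemma.

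For $(1)\Longrightarrow(2)$ I would use Zorn's lemma to embed $\mathcal{M}$ in a masa $\mathcal{W}$ of $\mathcal{B}$; the norm-closure of the union of an increasing chain of commutative C*-subalgebras containing $\mathcal{M}$ is again a commutative C*-subalgebra, so Zorn applies. For each $W\in\mathcal{W}\setminus\mathcal{M}$, hypothesis $(1)$ produces $T_{W}\in(\mathcal{M},\mathcal{B})^{\prime}$ with $T_{W}W\neq WT_{W}$. The one step where the ring argument does not transfer verbatim is that I now need an element which is simultaneously \emph{normal} and commutes with $\mathcal{M}$, so that $\mathcal{M}$ together with that element generates a commutative C*-subalgebra. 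Writing $T_{W}=A+iB$ with $A,B$ self-adjoint, both lie in $(\mathcal{M},\mathcal{B})^{\prime}$ (since $\mathcal{M}$ is self-adjoint, its relative commutant is self-adjoint as well), and at least one of them, say $R_{W}$, still fails to commute with $W$. Then $C^{\ast}(\mathcal{M}\cup\{R_{W}\})$ is commutative, sits inside some masa $\mathcal{S}_{W}$, and $W\notin\mathcal{S}_{W}$ (else $W$ would commute with $R_{W}$). Intersecting produces $\mathcal{M}=\mathcal{W}\cap\bigcap_{W\in\mathcal{W}\setminus\mathcal{M}}\mathcal{S}_{W}$. I do not anticipate any serious obstacle; the lone subtlety is precisely this replacement of $T_{W}$ by a self-adjoint witness in $(\mathcal{M},\mathcal{B})^{\prime}$, which is exactly where the C*-algebraic (rather than merely ring-theoretic) framework is used.
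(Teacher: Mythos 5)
Your proposal is correct and follows exactly the route the paper intends (the paper offers no separate proof, merely the instruction to replace ``ring'' by ``C*-algebra'' throughout the Lemma), and you have rightly isolated the one point where the translation needs care, namely replacing $T_{W}$ by a self-adjoint witness $R_{W}\in\left(\mathcal{M},\mathcal{B}\right)^{\prime}$ so that $C^{\ast}\left(\mathcal{M}\cup\left\{R_{W}\right\}\right)$ is genuinely commutative. The only small slip is in your verification that a masa $\mathcal{E}$ satisfies $\mathcal{E}=\left(\mathcal{E},\mathcal{B}\right)^{\prime}$: knowing that both $A$ and $A^{\ast}$ lie in $\left(\mathcal{E},\mathcal{B}\right)^{\prime}$ does not by itself make $C^{\ast}\left(\mathcal{E}\cup\left\{A\right\}\right)$ commutative (one needs $A$ normal), but the same real/imaginary decomposition you deploy two paragraphs later --- adjoin $\operatorname{Re}A$ and $\operatorname{Im}A$ to $\mathcal{E}$ one at a time and invoke maximality for each --- closes this immediately.
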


\bigskip

We now know that every masa in a C*-algebra is normal. If $\mathcal{M}$ is a
masa in a von Neumann algebra $\mathcal{B}$, then the double commutant theorem
holds even with a distance formula. The proof is a simple adaptation of the
proof of Lemma 3.1 in \cite{R}.

\bigskip

\begin{lemma}
\label{masa}Suppose $\mathcal{M}$ is a masa in a von Neumann algebra
$\mathcal{B}$ and $T\in\mathcal{B}$. Then
\[
dist\left(  T,\mathcal{M}\right)  \leq\sup\left\{  \left\Vert UT-TU\right\Vert
:U=U^{\ast}\in\mathcal{B},U^{2}=1\right\}  =
\]%
\[
2\sup\left\{  \left\Vert TP-PT\right\Vert :P=P^{\ast}=P^{2}\in\mathcal{B}%
\right\}
\]

\end{lemma}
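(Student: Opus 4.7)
The proof has two independent pieces: the equality between the two suprema, and the distance inequality. I would take them up in that order.

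\emph{Equality of the suprema.} The map $P \mapsto U = 2P - 1$ is a bijection between projections and symmetries in $\mathcal{B}$, with inverse $U \mapsto (U + 1)/2$. A one-line computation gives $UT - TU = 2(PT - TP)$, so $\|UT - TU\| = 2\|PT - TP\|$ on each matched pair, and the sups agree after a factor of $2$.

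\emph{Distance inequality.} I would adapt the argument of Lemma~3.1 of \cite{R}. Because $\mathcal{M}$ is a masa, $\mathcal{M}' \cap \mathcal{B} = \mathcal{M}$; in particular every symmetry in $\mathcal{M}$ is a symmetry in $\mathcal{B}$, so it suffices to exhibit, for each $\epsilon > 0$, a symmetry $U \in \mathcal{M}$ with $\|UT - TU\| \geq dist(T, \mathcal{M}) - \epsilon$. Set $d = dist(T, \mathcal{M})$. By Hahn--Banach applied in the predual, I fix a normal functional $\phi \in \mathcal{B}_*$ with $\|\phi\| \leq 1$, $\phi|_{\mathcal{M}} = 0$, and $|\phi(T)| > d - \epsilon$. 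Writing $\phi = v \cdot |\phi|$ in polar form gives a positive normal functional $|\phi|$ and a partial isometry $v \in \mathcal{B}$; the condition $\phi|_{\mathcal{M}} = 0$ combined with the masa property splits the support of $|\phi|$ inside $\mathcal{M}$ into two complementary projections, whose associated symmetry $U \in \mathcal{M}$ is arranged so that a vector-state realization of $\phi$ yields $\|UT - TU\| \geq |\phi(T)| \geq d - \epsilon$. Letting $\epsilon \downarrow 0$ closes the argument.

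\emph{Main obstacle.} The delicate step is this last one: turning the polar data of $\phi$ into a symmetry $U \in \mathcal{M}$ whose commutator with $T$ recovers $|\phi(T)|$. This is where the masa hypothesis is essential --- it is the maximality of $\mathcal{M}$ in $\mathcal{B}$ that forces the relevant spectral projections of $|\phi|$ to sit inside $\mathcal{M}$. Once that step is in place, the rest of the adaptation of the proof in \cite{R} is routine, since every construction stays inside $\mathcal{B}$.
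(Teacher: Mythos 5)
Your handling of the equality of the two suprema via $U=2P-1$ is fine, and so is the observation that it suffices to produce symmetries lying in $\mathcal{M}$ itself. The problem is the distance inequality: the step you yourself flag as ``delicate'' is not a loose end to be tidied --- it is the entire content of the lemma --- and the route you sketch for it does not work. Fixing $\phi\in\mathcal{B}_*$ with $\|\phi\|\le 1$, $\phi|_{\mathcal{M}}=0$ and $|\phi(T)|>d-\epsilon$ is legitimate, since $\mathcal{M}$ is weak* closed and the predual annihilator computes the distance. But nothing forces the polar data of $\phi$ to interact with $\mathcal{M}$: the partial isometry $v$ and the support projection of $|\phi|$ live in $\mathcal{B}$ and in general lie nowhere near $\mathcal{M}$, and $\phi|_{\mathcal{M}}=0$ is a condition on the values of $\phi$, not a structural statement that ``splits the support of $|\phi|$ into two complementary projections of $\mathcal{M}$.'' The masa hypothesis enters the lemma only through the identity $\left(\mathcal{M},\mathcal{B}\right)'=\mathcal{M}$; it gives no control over spectral projections of $|\phi|$. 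As written, the claim that a single symmetry $U\in\mathcal{M}$ manufactured from $\phi$ satisfies $\|UT-TU\|\ge|\phi(T)|$ is an assertion, not an argument, and I do not see how to make it one.

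The paper's proof avoids duality entirely, and you should compare. For each finite orthogonal family $\mathcal{F}$ of projections in $\mathcal{M}$ summing to $1$, the elements $\sum_{P\in\mathcal{F}}\lambda_P P$ with $\lambda_P=\pm 1$ form a finite group $G\left(\mathcal{F}\right)$ of symmetries in $\mathcal{M}$; since $\|T-UTU^{\ast}\|=\|TU-UT\|\le R$ for each such $U$ (where $R$ denotes the right-hand side of the inequality), the group average $\frac{1}{\mathrm{card}\,G\left(\mathcal{F}\right)}\sum_{U}UTU^{\ast}=\sum_{P\in\mathcal{F}}PTP$ lies in the closed ball of radius $R$ about $T$ and commutes with every projection in $\mathcal{F}$. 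A weak-operator cluster point of these averages, taken over the net of finite partitions ordered by refinement, therefore commutes with every projection of $\mathcal{M}$, hence lies in $\left(\mathcal{M},\mathcal{B}\right)'\cap\mathcal{B}=\mathcal{M}$, while remaining within distance $R$ of $T$. That compactness-plus-averaging step is exactly what replaces the unproved step in your sketch; a duality proof would in effect have to reproduce this averaging on the predual side and buys nothing here.
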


\begin{proof}
Let $R$ denote the right-hand side of the inequality, and let $D$ be the
closed ball in $\mathcal{B}$ centered at $T$ with radius $R$. Suppose
$\mathcal{F}$ is a finite orthogonal set of projections in $\mathcal{M}$ whose
sum is $1$. Let $G\left(  \mathcal{F}\right)  $ be the set of all sums of the
form
\[%
{\displaystyle\sum_{P\in\mathcal{F}}}
\lambda_{P}P
\]
with each $\lambda_{P}$ in $\left\{  -1,1\right\}  $. Then $G\left(
\mathcal{F}\right)  $ is a finite group of unitaries and each $U\in G\left(
\mathcal{F}\right)  $ has the form $2Q-1$ with $Q$ a finite sum of elements in
$\mathcal{F}$. Moreover, if $U=2Q-1$,%

\[
2\left\Vert TQ-QT\right\Vert =\left\Vert TU-UT\right\Vert =\left\Vert
T-UTU^{\ast}\right\Vert .
\]
It follows that $UTU^{\ast}\in D$ for every $U\in G\left(  \mathcal{F}\right)
$. Define%
\[
S_{\mathcal{F}}=\frac{1}{cardG\left(  \mathcal{F}\right)  }%
{\displaystyle\sum_{U\in G\left(  F\right)  }}
UTU^{\ast}.
\]
Since $G\left(  \mathcal{F}\right)  $ is a group, it easily follows that, for
every $U_{0}\in G\left(  \mathcal{F}\right)  $,%
\[
U_{0}S_{\mathcal{F}}U_{0}^{\ast}=S_{\mathcal{F}}.
\]
This implies that $S_{\mathcal{F}}=%
{\displaystyle\sum_{P\in\mathcal{F}}}
PTP\in\mathcal{F}^{\prime}=G\left(  \mathcal{F}\right)  ^{\prime}.$ Choose a
subnet $\left\{  S_{\mathcal{F}_{\lambda}}\right\}  $ that converges in the
weak operator topology to $S\in D$. Then $S\in\left(  \mathcal{M}%
,\mathcal{B}\right)  ^{\prime}\cap D$. Since $\left(  \mathcal{M}%
,\mathcal{B}\right)  ^{\prime}=\mathcal{M}$, we conclude
\[
dist\left(  T,\mathcal{M}\right)  \leq\left\Vert T-S\right\Vert \leq R.
\]

\end{proof}

We now address the approximate double commutant relative to a C*-algebra. If
$\mathcal{S}$ is a subset of a C*-algebra $\mathcal{B}$, we know that
Appr$\left(  \mathcal{S},\mathcal{B}\right)  ^{\prime\prime}$ must contain the
center $\mathcal{Z}\left(  \mathcal{B}\right)  =\mathcal{B}\cap\mathcal{B}%
^{\prime}$. Hence if $\mathcal{A}$ is a unital C*-subalgebra of a von Neumann
algebra $\mathcal{B}$, then%
\[
C^{\ast}\left(  \mathcal{A\cup Z}\left(  \mathcal{B}\right)  \right)
\subseteq Appr\left(  \mathcal{A},\mathcal{B}\right)  ^{\prime\prime}.
\]
When $\mathcal{A}$ is commutative, we will prove that equality holds.

The following result is based on S. Macado's generalization \cite{M} of the
Bishop-Stone-Weierstrass theorem. See \cite{Ra} for a beautiful short
elementary proof.

\begin{lemma}
\label{machado}Suppose $\mathcal{W}$ is a unital C*-subalgebra of a
commutative C*-algebra $\mathcal{D}$, and $S=S^{\ast}\in\mathcal{D}$. Then
there are multiplicative linear functionals $\alpha,\beta$ on $\mathcal{D}$
and nets $\left\{  A_{\lambda}\right\}  ,\left\{  B_{\lambda}\right\}
,\left\{  X_{\lambda}\right\}  $ and $\left\{  Y_{\lambda}\right\}  $ in
$\mathcal{D}$ such that

\begin{enumerate}
\item $0\leq X_{\lambda}\leq A_{\lambda}\leq1,0\leq Y_{\lambda}\leq
B_{\lambda}\leq1$,

\item $X_{\lambda}Y_{\lambda}=0,A_{\lambda}X_{\lambda}=X_{\lambda},$
$Y_{\lambda}B_{\lambda}=Y_{\lambda}$,

\item $\left\Vert DX_{\lambda}-\alpha\left(  D\right)  X_{\lambda}\right\Vert
\rightarrow0$ and $\left\Vert DY_{\lambda}-\beta\left(  D\right)  Y_{\lambda
}\right\Vert \rightarrow0$ for every $D\in\mathcal{D}$,

\item $\alpha\left(  A\right)  =\beta\left(  A\right)  $ for every
$A\in\mathcal{W}$,

\item $\alpha\left(  X_{\lambda}\right)  =\beta\left(  Y_{\lambda}\right)  =1$
for every $\lambda$,

\item $\beta\left(  S\right)  -\alpha\left(  S\right)  =2dist\left(
S,\mathcal{W}\right)  $.
\end{enumerate}
\end{lemma}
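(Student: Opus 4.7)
The plan is to combine Gelfand duality with Machado's refinement of the Bishop--Stone--Weierstrass theorem. Since $\mathcal{D}$ is a unital commutative C*-algebra, write $\mathcal{D}=C(\Omega)$ for its spectrum $\Omega$. The inclusion $\mathcal{W}\subseteq\mathcal{D}$ dualizes to a continuous surjection $\pi\colon\Omega\to\mathrm{Spec}(\mathcal{W})$, and its fibers are, since $\mathcal{W}$ is self-adjoint, precisely the maximal $\mathcal{W}$-antisymmetric subsets of $\Omega$; equivalently they are the maximal sets on which every element of $\mathcal{W}$ is constant.

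By Machado's theorem \cite{M, Ra}, there is a nonempty compact $\mathcal{W}$-antisymmetric set $E\subseteq\Omega$ with $\mathrm{dist}(S,\mathcal{W})=\mathrm{dist}(S|_E,\mathcal{W}|_E)$. Replacing $E$ by the fiber containing it (which cannot change the distance, since restriction never increases distance and the inequality in the other direction is already achieved), we may assume $\mathcal{W}|_E=\mathbb{C}\cdot 1_E$; since $S|_E$ is real-valued, the best scalar approximation is its midrange, so $\mathrm{dist}(S|_E,\mathcal{W}|_E)=\tfrac12(\max_E S-\min_E S)$. Compactness then yields $\omega_1,\omega_2\in E$ at which $S$ attains its minimum and maximum on $E$.

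Set $\alpha$ and $\beta$ to be evaluation at $\omega_1$ and $\omega_2$. They are multiplicative linear functionals on $\mathcal{D}$, they agree on $\mathcal{W}$ because $\omega_1,\omega_2$ lie in a common fiber of $\pi$, and $\beta(S)-\alpha(S)=\max_E S-\min_E S=2\,\mathrm{dist}(S,\mathcal{W})$, giving (4) and (6). For the nets, take decreasing neighborhood bases $\{U_\lambda\}$ of $\omega_1$ and $\{V_\lambda\}$ of $\omega_2$ with $\overline{U_\lambda}\cap\overline{V_\lambda}=\emptyset$ (possible when $\omega_1\neq\omega_2$) and, via Urysohn's lemma, pick $X_\lambda\in C(\Omega;[0,1])$ supported in $U_\lambda$ with $X_\lambda(\omega_1)=1$, then $A_\lambda\in C(\Omega;[0,1])$ supported in $U_\lambda$ with $A_\lambda\equiv 1$ on $\mathrm{supp}(X_\lambda)$; construct $Y_\lambda,B_\lambda$ symmetrically at $\omega_2$ inside $V_\lambda$. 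Properties (1), (2), (5) are immediate from the construction, and property (3) reduces to $\left\Vert DX_\lambda-\alpha(D)X_\lambda\right\Vert \le\sup_{U_\lambda}\left\vert D-D(\omega_1)\right\vert\to 0$ by continuity of $D$.

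The one delicate point is the disjointness of supports, which demands $\omega_1\neq\omega_2$. If $\mathrm{dist}(S,\mathcal{W})=0$ the extrema can coincide, but then $S\in\mathcal{W}$, so one may replace $\omega_1,\omega_2$ by any two distinct points in a common nontrivial fiber of $\pi$ (these exist unless $\mathcal{W}=\mathcal{D}$, the trivial case); condition (4) is preserved, the identity in (6) then reads $0=0$, and the rest of the construction is unchanged.
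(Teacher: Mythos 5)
Your proof is correct and follows essentially the same route as the paper's: Gelfand transform, Machado's theorem to produce a $\mathcal{W}$-antisymmetric set on which every element of $\mathcal{W}$ is constant, evaluation functionals at the extrema of $S$ on that set, and Urysohn functions on shrinking disjoint neighborhoods. Your explicit treatment of the degenerate case $\omega_1=\omega_2$ addresses a point the paper passes over silently (its directed set of disjoint neighborhood pairs would be empty there); note only that when $\mathcal{W}=\mathcal{D}$ conditions (2), (4) and (5) are jointly unsatisfiable, so that case must be excluded from the statement rather than handled by any construction.
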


\begin{proof}
Let $K$ be the maximal ideal space of $\mathcal{D}$ and let $\Gamma
:\mathcal{D}\rightarrow C\left(  K\right)  $ be the Gelfand map, which must be
a $\ast$-isomorphism since $\mathcal{D}$ is a commutative C*-algebra. Let
$g=\Gamma\left(  S\right)  =\Gamma\left(  S^{\ast}\right)  =\bar{g}$ It
follows from Machado's theorem \cite{M} that there is a $\Gamma\left(
\mathcal{W}\right)  $-antisymmetric set $E\subseteq K$ such that%
\[
dist\left(  S,\mathcal{W}\right)  =dist\left(  g,\Gamma\left(  \mathcal{W}%
\right)  \right)  =dist\left(  g|_{E},\Gamma\left(  \mathcal{W}\right)
|_{E}\right)  .
\]
Since $\mathcal{\Gamma}\left(  \mathcal{W}\right)  $ is self-adjoint and $E$
is $\Gamma\left(  \mathcal{W}\right)  $-antisymmetric, every function in
$\Gamma\left(  \mathcal{W}\right)  $ is constant. Hence $dist\left(
g|_{E},\Gamma\left(  \mathcal{W}\right)  |_{E}\right)  $ is the distance from
$g|_{E}$ to the constant functions. It is clear that the closest constant
function to $g|_{E}$ is
\[
\frac{g\left(  \beta\right)  -g\left(  \alpha\right)  }{2},
\]
where $\alpha,\beta\in E,$ $g\left(  \beta\right)  =\max_{x\in E}g\left(
x\right)  $ and $g\left(  \alpha\right)  =\min_{x\in E}g\left(  x\right)  $.
Let $\Lambda$ be the directed set of all pairs $\lambda=\left(  U_{\lambda
},V_{\lambda}\right)  $ of disjoint open sets with $\alpha\in U_{\lambda}$ and
$\beta\in V_{\lambda}$, ordered by $\lambda_{1}\leq\lambda_{2}$ if and only if
$U_{\lambda_{2}}\subseteq U_{\lambda_{1}}$ and $V_{\lambda_{2}}\subseteq
V_{\lambda_{1}}$. For each $\lambda\in\Lambda$ choose continuous functions
$r_{\lambda},s_{\lambda},t_{\lambda},u_{\lambda}:K\rightarrow\left[
0,1\right]  $ such that

\begin{enumerate}
\item[a.] $r_{\lambda}\left(  \alpha\right)  =t_{\lambda}\left(  \beta\right)
=1$,

\item[b.] $0\leq r_{\lambda}=r_{\lambda}s_{\lambda}\leq s_{\lambda}\leq1$,

\item[c.] $0\leq t_{\lambda}=t_{\lambda}u_{\lambda}\leq u_{\lambda}\leq1,$

\item[d.] supp$s_{\lambda}\subseteq U_{\lambda}$ and supp$u_{\lambda}\subseteq
V_{\lambda}$.
\end{enumerate}

If we choose $A_{\lambda},B_{\lambda}X_{\lambda},Y_{\lambda}\in\mathcal{A}$
such that $\Gamma\left(  X_{\lambda}\right)  =r_{\lambda},$ $\Gamma\left(
A_{\lambda}\right)  =s_{\lambda}$, $\Gamma\left(  Y_{\lambda}\right)
=t_{\lambda}$, and $\Gamma\left(  B_{\lambda}\right)  =u_{\lambda}$, then
statements $\left(  1\right)  $-$\left(  6\right)  $ are clear.
\end{proof}

\bigskip

A C*-algebra $\mathcal{B}$ is \emph{primitive} if it has a faithful
irreducible representation. A C*-algebra $\mathcal{B}$ is \emph{prime} if, for
every $x,y\in\mathcal{B}$, we have
\[
x\mathcal{B}y=\left\{  0\right\}  \Longrightarrow x=0\text{ or }y=0.
\]
Every primitive C*-algebra is prime, and it was proved by Dixmier \cite{D}
that every separable prime C*-algebra is primitive. N. Weaver \cite{W} gave an
example of a nonseparable prime C*-algebra that is not primitive.

We define $\mathcal{B}$ to be \emph{centrally prime} if, whenever
$x,y\in\mathcal{B},$ $0\leq x,y\leq1$ and $x\mathcal{B}y=\left\{  0\right\}
$, there is an $e\in\mathcal{Z}\left(  \mathcal{B}\right)  $ such that $x\leq
e\leq1$ and $y\leq1-e\leq1$. The centrally prime algebras in include the prime
ones, von Neumann algebras, and $%
{\displaystyle\prod_{i\in I}}
\mathcal{B}_{i}/%
{\displaystyle\sum_{i\in I}}
\mathcal{B}_{i}$ or a C*-ultraproduct $%
{\displaystyle\prod\limits_{i\in I}^{\alpha}}
\mathcal{B}_{i}$ when $\left\{  \mathcal{B}_{i}:i\in I\right\}  $ is a
collection of unital primitive C*-algebras (see the proof of Theorem
\ref{dcAppl}) .

We characterize $Appr\left(  \mathcal{A},\mathcal{B}\right)  ^{\prime\prime}$
for every commutative C*-subalgebra $\mathcal{A}$ of a centrally prime
C*-algebra $\mathcal{B}$, and we show that there is a distance formula for
every commutative unital C*-subalgebra if and only if every masa in
$\mathcal{B}$ has a distance formula. In particular, when $\mathcal{B}$ is a
von Neumann algebra, we obtain a distance formula.

\begin{remark}
\label{distform}Here is a useful comment on distance formulas. If
$\mathcal{B}$ is a unital C*-algebra and $\mathcal{S}=\mathcal{S}^{\ast
}\subseteq\mathcal{B}$, then $\left(  \mathcal{S},\mathcal{B}\right)
^{\prime}$ is a unital C*-algebra, so, by the Russo-Dye theorem, the closed
unit ball of $\left(  \mathcal{S},\mathcal{B}\right)  ^{\prime}$ is the
norm-closed convex hull of the set of unitary elements in $\left(
\mathcal{S},\mathcal{B}\right)  ^{\prime}$. Hence, for any $T\in\mathcal{B}$,%
\[
\sup\left\{  \left\Vert TW-WT\right\Vert :W\in\left(  \mathcal{S}%
,\mathcal{B}\right)  ^{\prime},\left\Vert W\right\Vert \leq1\right\}  =
\]%
\[
\sup\left\{  \left\Vert TU-UT\right\Vert :U\in\left(  \mathcal{S}%
,\mathcal{B}\right)  ^{\prime},U\text{ is unitary}\right\}  .
\]
A similar result holds in the approximate case. Suppose $\left(  \Lambda
,\leq\right)  $ is a directed set. Then $%
{\displaystyle\prod_{\lambda\in\Lambda}}
\mathcal{B}$ is a unital C*-algebra and the set
\[
\mathcal{E=}\left\{  \left\{  W_{\lambda}\right\}  \in%
{\displaystyle\prod_{\lambda\in\Lambda}}
\mathcal{B}:\forall S\in\mathcal{S},\lim_{\lambda}\left\Vert W_{\lambda
}S-SW_{\lambda}\right\Vert =0\right\}
\]
is a unital C*-algebra and is the closed convex hull of its unitary group.
Hence%
\[
\sup\left\{  \limsup_{\lambda}\left\Vert TW_{\lambda}-W_{\lambda}T\right\Vert
:W=\left\{  W_{\lambda}\right\}  \in\mathcal{E},\left\Vert W\right\Vert
\leq1\right\}  =
\]%
\[
\sup\left\{  \limsup_{\lambda}\left\Vert TU_{\lambda}-U_{\lambda}T\right\Vert
:U=\left\{  U_{\lambda}\right\}  \in\mathcal{E},U\text{ is unitary}\right\}
.
\]

\end{remark}

\bigskip

\begin{theorem}
\label{abelian}Suppose $\mathcal{B}$ is a centrally prime unital C*-algebra
and $\mathcal{Z}\left(  \mathcal{B}\right)  \subseteq\mathcal{W}%
\subseteq\mathcal{D}$ are unital commutative C*-subalgebras of $\mathcal{B}$.
Suppose $S=S^{\ast}\in\mathcal{D}$. Then there is a net $\left\{  W_{\lambda
}\right\}  $ in $\mathcal{B}$ such that

\begin{enumerate}
\item $W_{\lambda}$ is unitary for every $\lambda$,

\item $\lim_{\lambda}\left\Vert AW_{\lambda}-W_{\lambda}A\right\Vert =0$ for
every $A\in\mathcal{W}$,

\item $\lim_{\lambda}$ $\left\Vert SW_{\lambda}-W_{\lambda}S\right\Vert
=2dist\left(  S,\mathcal{W}\right)  $.
\end{enumerate}

Moreover, if $\mathcal{B}$ is a von Neumann algebra,, then there is a sequence
$\left\{  P_{n}\right\}  $ of projections in $\mathcal{B}$ such that

\begin{enumerate}
\item[4.] $\lim_{\lambda}\left\Vert AP_{\lambda}-P_{\lambda}A\right\Vert =0$
for every $A\in\mathcal{W}$,

\item[5.] $\lim_{\lambda}$ $\left\Vert SP_{\lambda}-P_{\lambda}S\right\Vert
=dist\left(  S,\mathcal{W}\right)  $.
\end{enumerate}
\end{theorem}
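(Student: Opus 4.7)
The plan is to apply Lemma~\ref{machado} to $\mathcal{W}\subseteq\mathcal{D}$ and $S$, obtaining multiplicative functionals $\alpha,\beta:\mathcal{D}\to\mathbb{C}$ with $\alpha|_\mathcal{W}=\beta|_\mathcal{W}$ and $\beta(S)-\alpha(S)=2\,\mathrm{dist}(S,\mathcal{W})$, together with approximate ``spectral projections at $\alpha$ and $\beta$,'' namely nets $X_\lambda,Y_\lambda\in\mathcal{D}$ with $\alpha(X_\lambda)=\beta(Y_\lambda)=1$, $X_\lambda Y_\lambda=0$, and $\|DX_\lambda-\alpha(D)X_\lambda\|,\|DY_\lambda-\beta(D)Y_\lambda\|\to 0$ for all $D\in\mathcal{D}$. (The case $\mathrm{dist}(S,\mathcal{W})=0$ is handled by $W_\lambda=1$.) Applied with $D=X_\lambda,Y_\lambda$, the last estimates also say $X_\lambda,Y_\lambda$ are approximate projections.

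The critical use of the centrally prime hypothesis together with $\mathcal{Z}(\mathcal{B})\subseteq\mathcal{W}$ is to force $X_\lambda\mathcal{B}Y_\lambda\ne\{0\}$: otherwise centrally prime supplies $e\in\mathcal{Z}(\mathcal{B})$ with $X_\lambda\le e\le 1$ and $Y_\lambda\le 1-e\le 1$, so applying the positive unital functionals $\alpha,\beta$ and using $\alpha(X_\lambda)=\beta(Y_\lambda)=1$ forces $\alpha(e)=1\ne 0=\beta(e)$, contradicting $e\in\mathcal{W}$ and $\alpha|_\mathcal{W}=\beta|_\mathcal{W}$. So pick $v_\lambda\in\mathcal{B}$ with $\|v_\lambda\|\le 1$ and $a_\lambda:=X_\lambda v_\lambda Y_\lambda\ne 0$. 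The identity
\[
A a_\lambda-a_\lambda A=(AX_\lambda-\alpha(A)X_\lambda)v_\lambda Y_\lambda+(\alpha(A)-\beta(A))a_\lambda-X_\lambda v_\lambda(Y_\lambda A-\beta(A)Y_\lambda),
\]
whose middle term vanishes for $A\in\mathcal{W}$, together with the Machado estimates yields $\|Aa_\lambda-a_\lambda A\|\to 0$ for $A\in\mathcal{W}$, and the same expansion with $S$ in place of $A$ gives $\|Sa_\lambda-a_\lambda S\|=2\,\mathrm{dist}(S,\mathcal{W})\|a_\lambda\|+o(1)$.

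To extract a net of unitaries I would invoke Remark~\ref{distform}. The normalized contractions $C_\lambda:=a_\lambda/\|a_\lambda\|$ form a unit-norm element $C\in\mathcal{E}$ (the C*-algebra of $\mathcal{W}$-asymptotic commutants) with $\limsup_\lambda\|SC_\lambda-C_\lambda S\|=2\,\mathrm{dist}(S,\mathcal{W})$, and the Russo--Dye identity inside $\mathcal{E}$ provides a net of unitaries $\{W_\lambda\}\subseteq\mathcal{B}$ achieving (1)--(3). The main obstacle is that dividing by $\|a_\lambda\|$ amplifies the Machado errors by a factor $1/\|a_\lambda\|$: one must choose $v_\lambda$ so that $\|a_\lambda\|$ is close to $\sup_{\|v\|\le 1}\|X_\lambda v Y_\lambda\|$ (positive by the preceding step) and then refine the Machado index far enough that the errors are small relative to that supremum.

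For the moreover-claim with $\mathcal{B}$ a von Neumann algebra, I would replace $X_\lambda,Y_\lambda$ by the spectral projections $P_\lambda:=\chi_{[1/2,1]}(X_\lambda),\,Q_\lambda:=\chi_{[1/2,1]}(Y_\lambda)\in\mathcal{B}$, which are norm-close to $X_\lambda,Y_\lambda$ (the spectra cluster on $\{0,1\}$ because $\|X_\lambda^2-X_\lambda\|\to 0$) and satisfy $P_\lambda Q_\lambda=0$; the centrally prime argument applied to $P_\lambda,Q_\lambda$ shows their central carriers overlap, so comparability of projections in $\mathcal{B}$ provides a genuine partial isometry $p_\lambda\in\mathcal{B}$ with $p_\lambda^*p_\lambda\le Q_\lambda$ and $p_\lambda p_\lambda^*\le P_\lambda$. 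A short calculation---unitarity of $U_\lambda:=1-p_\lambda p_\lambda^*-p_\lambda^*p_\lambda+p_\lambda+p_\lambda^*$ from $p_\lambda^2=0$ (which follows from $(p_\lambda p_\lambda^*)(p_\lambda^*p_\lambda)=0$), plus $\|p_\lambda-p_\lambda^*\|=1$---then gives $SU_\lambda-U_\lambda S\approx(\alpha(S)-\beta(S))(p_\lambda-p_\lambda^*)$ of norm $\to 2\,\mathrm{dist}(S,\mathcal{W})$, and a cofinal subsequence $\lambda_n$ of Machado indices together with $P_n:=(U_{\lambda_n}+1)/2$ yields the projection sequence of (4)--(5).
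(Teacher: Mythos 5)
Your overall route is the paper's: Machado's lemma produces $\alpha,\beta$ and the peaking nets, central primeness forces $X_\lambda\mathcal{B}Y_\lambda\neq\{0\}$ exactly as you argue, and Remark \ref{distform} converts contractions to unitaries. But the normalization problem you flag as ``the main obstacle'' is a genuine gap, and your proposed repair does not close it. The supremum $s_\lambda=\sup_{\Vert v\Vert\leq1}\Vert X_\lambda vY_\lambda\Vert$ is positive for each $\lambda$ but admits no lower bound uniform in $\lambda$; refining the Machado index replaces $X_\lambda,Y_\lambda$ and hence changes $s_\lambda$ as well, so there is no reason the errors become small relative to $s_\lambda$ (already in $M_2\left(L^\infty[0,1]\right)$ one has $\sup_{\Vert v\Vert\leq1}\Vert XvY\Vert=\Vert XY\Vert_\infty$ for diagonal positive $X$ off-diagonally matched with $Y$, which can be far smaller than $\Vert X\Vert\,\Vert Y\Vert$ and can decay arbitrarily fast along the net). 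The paper's fix is precisely the part of Lemma \ref{machado} your proposal discards: the absorbing elements $A_\lambda,B_\lambda$ with $A_\lambda X_\lambda=X_\lambda$, $Y_\lambda B_\lambda=Y_\lambda$ and $\Vert\left(D-\alpha\left(D\right)\right)A_\lambda\Vert\rightarrow0$. Setting $W_\lambda=X_\lambda C_\lambda Y_\lambda$ with $C_\lambda$ scaled (not required to be a contraction) so that $\Vert W_\lambda\Vert=1$, one gets $W_\lambda=A_\lambda W_\lambda=W_\lambda B_\lambda$, hence $\Vert\left(D-\alpha\left(D\right)\right)W_\lambda\Vert\leq\Vert\left(D-\alpha\left(D\right)\right)A_\lambda\Vert$ -- a bound independent of $\Vert C_\lambda\Vert$, so no division by a small quantity ever occurs.

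The von Neumann part contains a second error: $X_\lambda$ is \emph{not} an approximate projection. The Machado estimate $\Vert DX_\lambda-\alpha\left(D\right)X_\lambda\Vert\rightarrow0$ holds for each \emph{fixed} $D$, and substituting the index-dependent $D=X_\lambda$ is illegitimate; in the Gelfand picture $X_\lambda$ is a continuous bump equal to $1$ at $\alpha$ and supported in a small neighborhood, so on a connected spectrum it takes the value $1/2$ and $\Vert X_\lambda^2-X_\lambda\Vert\geq1/4$ for every $\lambda$. Consequently $\chi_{[1/2,1]}\left(X_\lambda\right)$ is not norm-close to $X_\lambda$. The spectral cutoff can still be made to work, but by a different argument -- $\left(D-\alpha\left(D\right)\right)\chi_{[t,1]}\left(X_\lambda\right)=\left(D-\alpha\left(D\right)\right)X_\lambda\,g\left(X_\lambda\right)$ with $g\left(s\right)=s^{-1}\chi_{[t,1]}\left(s\right)$ bounded by $1/t$ -- and the cutoff must be taken at $t<1/2$, since at $t=1/2$ your centrally-prime contradiction degenerates to $\alpha\left(e\right)\geq1/2\geq\beta\left(e\right)$, which is not a contradiction. (The paper avoids all of this by taking $V_\lambda$ with range projection dominated by the support of $X_\lambda$, which is again absorbed by $A_\lambda$.) The remaining algebra -- $p_\lambda^2=0$, the projection formula, $\Vert p_\lambda-p_\lambda^*\Vert=1$, and the resulting value $dist\left(S,\mathcal{W}\right)$ -- is correct and matches the paper.
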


\begin{proof}
Let $\mathcal{W}=C^{\ast}\left(  \mathcal{A}\cup\mathcal{Z}\left(
\mathcal{B}\right)  \cup\left\{  S\right\}  \right)  ,$ $\mathcal{D}=C^{\ast
}\left(  \mathcal{A}\cup\mathcal{Z}\left(  \mathcal{B}\right)  \cup\left\{
S\right\}  \right)  $. Now choose $\alpha,\beta$ and nets $\left\{
A_{\lambda}\right\}  ,\left\{  B_{\lambda}\right\}  ,\left\{  X_{\lambda
}\right\}  $ and $\left\{  Y_{\lambda}\right\}  $ in $\mathcal{D}$ as in Lemma
\ref{machado}. We first show that $X_{\lambda}\mathcal{B}Y_{\lambda}%
\neq\left\{  0\right\}  $; otherwise, since $\mathcal{B}$ is centrally prime,
there is an $e\in\mathcal{Z}\left(  \mathcal{B}\right)  $ such that
$X_{\lambda}\leq e\leq1$ and $Y_{\lambda}\leq1-e\leq1.$ Hence $\alpha\left(
e\right)  =1$ and $\beta\left(  1-e\right)  =0$, or $\beta\left(  e\right)
=0$. However, $e\in\mathcal{W}$ and, by part $\left(  4\right)  $ of Lemma
\ref{machado}, we get $\alpha\left(  e\right)  =\beta\left(  e\right)  $. This
contradiction shows that $X_{\lambda}\mathcal{B}Y_{\lambda}\neq\left\{
0\right\}  .$ Hence there is a $C_{\lambda}\in\mathcal{B}$ such that
$\left\Vert X_{\lambda}C_{\lambda}Y_{\lambda}\right\Vert =1.$ Define
$W_{\lambda}=X_{\lambda}C_{\lambda}Y_{\lambda}=A_{\lambda}W_{\lambda
}=W_{\lambda}B_{\lambda}$. Lemma \ref{machado} implies that, for every
$D\in\mathcal{D}$,%
\[
\left\Vert DW_{\lambda}-\alpha\left(  D\right)  W_{\lambda}\right\Vert
=\left\Vert DA_{\lambda}W_{\lambda}-\alpha\left(  D\right)  A_{\lambda
}W_{\lambda}\right\Vert \leq\left\Vert \left[  D-\alpha\left(  D\right)
\right]  A_{\lambda}\right\Vert \left\Vert W_{\lambda}\right\Vert
\rightarrow0,
\]
and
\[
\left\Vert W_{\lambda}D-\beta\left(  D\right)  W_{\lambda}\right\Vert
=\left\Vert W_{\lambda}B_{\lambda}D-\beta\left(  D\right)  W_{\lambda
}B_{\lambda}\right\Vert \leq\left\Vert W_{\lambda}\right\Vert \left\Vert
B_{\lambda}\left[  D-\alpha\left(  D\right)  \right]  \right\Vert
\rightarrow0.
\]
Since $\alpha\left(  A\right)  =\beta\left(  A\right)  $ for every
$A\in\mathcal{W}$, it follows that $\left\Vert AW_{\lambda}-W_{\lambda
}A\right\Vert \rightarrow0$. It also follows that
\[
\lim_{\lambda}\left\Vert W_{\lambda}S-SW_{\lambda}\right\Vert =\lim_{\lambda
}\left\vert \beta\left(  S\right)  -\alpha\left(  S\right)  \right\vert
\left\Vert W_{\lambda}\right\Vert =\left\vert \beta\left(  S\right)
-\alpha\left(  S\right)  \right\vert =2dist\left(  S,\mathcal{W}\right)  .
\]
We now appeal to Remark \ref{distform} to replace the net $\left\{
W_{\lambda}\right\}  $ with a net of unitaries.

Now suppose $\mathcal{B}$ is a von Neumann algebra. Once we get $X_{\lambda
}\mathcal{B}Y_{\lambda}\neq0$ we know that there is a partial isometry
$V_{\lambda}$ in $\mathcal{B}$ whose final space is contained in the closure
of $ranX_{\lambda}$ and whose initial space is contained in $\left(  \ker
Y_{\lambda}\right)  ^{\perp}$. Then $\left(  3\right)  $ holds with $\left\{
W_{\lambda}\right\}  $ replaced with $\left\{  V_{\lambda}\right\}  .$ Also,
$V_{\lambda}^{2}=0$ (since $X_{\lambda}Y_{\lambda}=0$), so $P_{\lambda}%
=\frac{1}{2}\left(  V_{\lambda}+V_{\lambda}^{\ast}+V_{\lambda}V_{\lambda
}^{\ast}+V_{\lambda}^{\ast}V_{\lambda}\right)  $ is a projection. Using the
above arguments gives us%
\[
\left\Vert DV_{\lambda}^{\ast}V_{\lambda}-\beta\left(  D\right)  V_{\lambda
}^{\ast}V_{\lambda}\right\Vert \rightarrow0,\left\Vert V_{\lambda}^{\ast
}V_{\lambda}D-\beta\left(  D\right)  V_{\lambda}^{\ast}V_{\lambda}\right\Vert
\rightarrow0
\]
and
\[
\left\Vert DV_{\lambda}V_{\lambda}^{\ast}-\alpha\left(  D\right)  V_{\lambda
}V_{\lambda}^{\ast}\right\Vert \rightarrow0,\left\Vert V_{\lambda}V_{\lambda
}^{\ast}D-\alpha\left(  D\right)  V_{\lambda}V_{\lambda}^{\ast}\right\Vert
\rightarrow0,
\]
which implies
\[
\left\Vert DV_{\lambda}^{\ast}V_{\lambda}-V_{\lambda}^{\ast}V_{\lambda
}D+DV_{\lambda}V_{\lambda}^{\ast}-V_{\lambda}V_{\lambda}^{\ast}D\right\Vert
\rightarrow0
\]
for every $D\in\mathcal{B}$. Thus
\[
\lim_{\lambda}\left\Vert SP_{\lambda}-P_{\lambda}S\right\Vert =\frac{1}{2}%
\lim\left\Vert \left(  \alpha\left(  S\right)  V_{\lambda}-V_{\lambda}%
\beta\left(  S\right)  \right)  +\left(  \beta\left(  S\right)  V_{\lambda
}^{\ast}-V_{\lambda}^{\ast}\alpha\left(  S\right)  \right)  \right\Vert =
\]%
\[
\lim_{\lambda}\frac{1}{2}\left\vert \beta\left(  S\right)  -\alpha\left(
S\right)  \right\vert \left\Vert V_{\lambda}^{\ast}-V_{\lambda}\right\Vert
=\frac{1}{2}\left\vert \beta\left(  S\right)  -\alpha\left(  S\right)
\right\vert =dist\left(  S,\mathcal{W}\right)  ,
\]
since $\left\Vert V_{\lambda}^{\ast}-V_{\lambda}\right\Vert =1$ for every
$\lambda$.
\end{proof}

\bigskip

\begin{theorem}
Suppose $\mathcal{A}$ is a unital commutative C*-subalgebra of a centrally
prime unital C*-algebra $\mathcal{B}$. Then%
\[
Appr\left(  \mathcal{A},\mathcal{B}\right)  ^{\prime\prime}=C^{\ast}\left(
\mathcal{A}\cup\mathcal{Z}\left(  \mathcal{B}\right)  \right)  .
\]
Hence $\mathcal{A}$ is normal if and only if $\mathcal{Z}\left(
\mathcal{B}\right)  \subseteq\mathcal{A}$.
\end{theorem}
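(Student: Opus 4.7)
The plan is to establish the equality $Appr(\mathcal{A},\mathcal{B})''=C^{\ast}(\mathcal{A}\cup\mathcal{Z}(\mathcal{B}))$ by two inclusions; the normality consequence will then follow by inspection.

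The inclusion $C^{\ast}(\mathcal{A}\cup\mathcal{Z}(\mathcal{B}))\subseteq Appr(\mathcal{A},\mathcal{B})''$ is routine. If $\{A_{\lambda}\}\subseteq\mathcal{B}$ is a bounded net that norm-asymptotically commutes with $\mathcal{A}$, then it exactly commutes with every $Z\in\mathcal{Z}(\mathcal{B})$. Since the set of elements of $\mathcal{B}$ with which $\{A_{\lambda}\}$ asymptotically commutes is always a norm-closed $\ast$-subalgebra of $\mathcal{B}$, it contains every element of $C^{\ast}(\mathcal{A}\cup\mathcal{Z}(\mathcal{B}))$.

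For the reverse inclusion, write $\mathcal{W}=C^{\ast}(\mathcal{A}\cup\mathcal{Z}(\mathcal{B}))$ and fix $T\in Appr(\mathcal{A},\mathcal{B})''$; the goal is to show $T\in\mathcal{W}$. Two preliminary reductions prepare the ground for Theorem \ref{abelian}. First, since $\mathcal{A}=\mathcal{A}^{\ast}$, taking adjoints sends any bounded net $\{A_{\lambda}\}$ asymptotically commuting with $\mathcal{A}$ to another such net $\{A_{\lambda}^{\ast}\}$; hence $Appr(\mathcal{A},\mathcal{B})''$ is a $\ast$-closed linear subspace, so we may assume $T=T^{\ast}$. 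Second, testing $T$ against the constant net $A_{\lambda}\equiv A$ for each $A\in\mathcal{A}$ shows that $T$ commutes exactly with $\mathcal{A}$; combined with the automatic commutation of $T$ with $\mathcal{Z}(\mathcal{B})$, this makes $\mathcal{D}=C^{\ast}(\mathcal{W}\cup\{T\})$ a commutative unital C*-subalgebra of $\mathcal{B}$.

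Now apply Theorem \ref{abelian} to the chain $\mathcal{Z}(\mathcal{B})\subseteq\mathcal{W}\subseteq\mathcal{D}$ with $S=T$. It produces a net $\{W_{\lambda}\}$ of unitaries in $\mathcal{B}$ with $\|AW_{\lambda}-W_{\lambda}A\|\to 0$ for every $A\in\mathcal{W}$ (in particular for $A\in\mathcal{A}$) and $\|TW_{\lambda}-W_{\lambda}T\|\to 2\,dist(T,\mathcal{W})$. The defining property of $Appr(\mathcal{A},\mathcal{B})''$ forces the left side to tend to zero, so $dist(T,\mathcal{W})=0$ and $T\in\mathcal{W}$. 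The main task is engineering the right triple so that Theorem \ref{abelian} applies; once the self-adjoint reduction is in place, that theorem does the heavy lifting. Finally, the ``Hence'' clause is then immediate: $\mathcal{A}$ coincides with its relative approximate double commutant precisely when $\mathcal{A}=C^{\ast}(\mathcal{A}\cup\mathcal{Z}(\mathcal{B}))$, which manifestly holds if and only if $\mathcal{Z}(\mathcal{B})\subseteq\mathcal{A}$.
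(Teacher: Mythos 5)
Your proof is correct and follows essentially the same route as the paper: both reduce to a self-adjoint element of the approximate double commutant sitting inside a commutative algebra $\mathcal{D}$ containing $\mathcal{W}=C^{\ast}(\mathcal{A}\cup\mathcal{Z}(\mathcal{B}))$, and then invoke Theorem \ref{abelian} to produce asymptotically commuting unitaries witnessing $2\,dist(S,\mathcal{W})$, forcing that distance to be zero. The only (cosmetic) difference is that you build $\mathcal{D}=C^{\ast}(\mathcal{W}\cup\{T\})$ directly from the exact commutation of $T$ with $\mathcal{A}$, whereas the paper first embeds everything in a masa.
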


\begin{proof}
It is clear that $\mathcal{W}=$ $C^{\ast}\left(  \mathcal{A}\cup
\mathcal{Z}\left(  \mathcal{B}\right)  \right)  \subseteq Appr\left(
\mathcal{A},\mathcal{B}\right)  ^{\prime\prime}$. Choose a masa $\mathcal{D}$
of $\mathcal{B}$ with $\mathcal{A}\subseteq\mathcal{D}$. Then%
\[
\mathcal{W}\subseteq Appr\left(  \mathcal{A},\mathcal{B}\right)
^{\prime\prime}\subseteq Appr\left(  \mathcal{D},\mathcal{B}\right)
^{\prime\prime}=\mathcal{D}.
\]
If we choose $S=S^{\ast}\in Appr\left(  \mathcal{A},\mathcal{B}\right)
^{\prime\prime}$ and apply Theorem \ref{abelian} we see that $S\in\mathcal{W}%
$. Since $Appr\left(  \mathcal{A},\mathcal{B}\right)  ^{\prime\prime}$ is a
C*-algebra, we have proved that $Appr\left(  \mathcal{A},\mathcal{B}\right)
^{\prime\prime}\subseteq\mathcal{W}$.\bigskip
\end{proof}

In the von Neumann algebra setting, we get a distance formula. We have not
tried to get the best constant.

\begin{theorem}
\label{main}Suppose $\mathcal{A}$ is a unital commutative C*-subalgebra of a
von Neumann algebra $\mathcal{B}$ and $T\in\mathcal{B}$. Then there is a net
$\left\{  P_{\lambda}\right\}  $ of projections in $\mathcal{B}$ such that,

\begin{enumerate}
\item for every $A\in\mathcal{A}$,%
\[
\left\Vert AP_{\lambda}-P_{\lambda}A\right\Vert \rightarrow0,
\]
and

\item
\[
dist\left(  T,C^{\ast}\left(  \mathcal{A}\cup\mathcal{Z}\left(  \mathcal{B}%
\right)  \right)  \right)  \leq10\lim_{\lambda}\left\Vert TP_{\lambda
}-P_{\lambda}T\right\Vert .
\]

\end{enumerate}
\end{theorem}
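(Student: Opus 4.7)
The plan is to reduce to the self-adjoint case and then combine Lemma~\ref{masa} (to approximate $T$ by an element of a masa) with Theorem~\ref{abelian} (which produces projections asymptotically commuting with $\mathcal{W}$). I aim for the self-adjoint constant $5$; the reduction then gives the overall constant $10$.

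First, write $T = T_1 + iT_2$ with $T_j = T_j^{\ast}$. Since $[T_j, P] = \tfrac{1}{2}([T, P] \pm [T^{\ast}, P])$ and $\|[T^{\ast}, P]\| = \|[T, P]\|$ whenever $P = P^{\ast}$, one has $\|[T_j, P]\| \leq \|[T, P]\|$. Granting the self-adjoint case with constant $5$ and producing a net $\{P^{(j)}_\lambda\}$ for each $T_j$, I form the combined net by taking, at each index, whichever of $P^{(1)}_\lambda$ and $P^{(2)}_\lambda$ maximizes $\|[T, \cdot]\|$. The resulting net still asymptotically commutes with $\mathcal{A}$, and its commutator limit with $T$ is at least $\max_j \lim\|[T_j, P^{(j)}_\lambda]\| \geq \max_j dist(T_j, \mathcal{W})/5 \geq dist(T, \mathcal{W})/10$, using $dist(T, \mathcal{W}) \leq dist(T_1, \mathcal{W}) + dist(T_2, \mathcal{W})$.

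For the self-adjoint case, use Zorn to enlarge the commuting set $\mathcal{A} \cup \mathcal{Z}(\mathcal{B})$ to a masa $\mathcal{D}$ of $\mathcal{B}$, and set $\mathcal{W} = C^{\ast}(\mathcal{A} \cup \mathcal{Z}(\mathcal{B}))$. The averaging argument in the proof of Lemma~\ref{masa}, carried out inside $\mathcal{D}$, produces a self-adjoint $S \in \mathcal{D}$ with $\|T - S\| \leq 2 R_{\mathcal{D}}$, where $R_{\mathcal{D}} := \sup\{\|[T, P]\| : P \text{ a projection in } \mathcal{D}\}$. Apply Theorem~\ref{abelian} to the triple $\mathcal{Z}(\mathcal{B}) \subseteq \mathcal{W} \subseteq \mathcal{D}$ and the element $S$ to obtain a net $\{P_\lambda\}$ of projections in $\mathcal{B}$ with $\|A P_\lambda - P_\lambda A\| \to 0$ for every $A \in \mathcal{W}$ and $\lim \|[S, P_\lambda]\| = dist(S, \mathcal{W})$. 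For each $\eta > 0$, pick a projection $R_\eta \in \mathcal{D}$ with $\|[T, R_\eta]\| \geq R_{\mathcal{D}} - \eta$ (it commutes with $\mathcal{A}$ exactly), and form the combined net $\{Q_\lambda\}$ by setting $Q_\lambda$ to be whichever of $P_\lambda$ and $R_\eta$ gives the larger $\|[T, \cdot]\|$. This is still a net of projections asymptotically commuting with $\mathcal{A}$.

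The bookkeeping rests on the observation that $\|[X, P]\| \leq \|X\|$ for any $X \in \mathcal{B}$ and any projection $P$: writing $P = (1 + U)/2$ for the symmetry $U = 2P - 1$, one has $[X, P] = \tfrac{1}{2}[X, U]$ and $\|[X, U]\| \leq 2\|X\|$. Applied to $X = T - S$, this yields $\|[T, P_\lambda]\| \geq \|[S, P_\lambda]\| - \|T - S\|$. Writing $c = \lim \|[T, Q_\lambda]\|$ and $\Delta = \|T - S\|$, one has $c \geq R_{\mathcal{D}} - \eta$ (so $\Delta \leq 2R_{\mathcal{D}} \leq 2c + 2\eta$) and $c \geq \lim \|[T, P_\lambda]\| \geq dist(S, \mathcal{W}) - \Delta$ (so $dist(S, \mathcal{W}) \leq c + \Delta$). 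Chaining with $dist(T, \mathcal{W}) \leq \Delta + dist(S, \mathcal{W})$ gives $dist(T, \mathcal{W}) \leq 2\Delta + c \leq 5c + 4\eta$, and a diagonal choice of net as $\eta \to 0$ delivers the self-adjoint constant $5$. The main obstacle is exactly this synthesis: making a single net witness both $R_{\mathcal{D}}$ and the Theorem~\ref{abelian} distance simultaneously while retaining asymptotic commutation with $\mathcal{A}$; the ``argmax'' construction handles it cleanly, provided one is willing to pass to subnets for convergence and to diagonalize in $\eta$ at the end.
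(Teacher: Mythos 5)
Your argument is correct and follows essentially the same route as the paper: reduce to the self-adjoint case via real and imaginary parts (doubling the constant from $5$ to $10$), use Lemma~\ref{masa} inside a masa $\mathcal{D}\supseteq C^{\ast}\left(\mathcal{A}\cup\mathcal{Z}\left(\mathcal{B}\right)\right)$ to produce $S$ with $\left\Vert T-S\right\Vert\leq 2R_{\mathcal{D}}$, and then apply Theorem~\ref{abelian} to $S$, finishing with the same triangle-inequality bookkeeping. The only difference is presentational: the paper packages the estimates through the seminorm $\Delta$ (the supremum over all admissible nets), whereas you assemble an explicit single witnessing net by the argmax/diagonalization device — arguably a slightly more careful rendering of the same proof.
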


\begin{proof}
Let $\mathcal{W}=C^{\ast}\left(  \mathcal{A}\cup\mathcal{Z}\left(
\mathcal{B}\right)  \right)  $. We define the seminorm $\Delta$ on
$\mathcal{B}$ by $\Delta\left(  V\right)  $ to be the supremum of
$\lim_{\lambda}\left\Vert VP_{\lambda}-P_{\lambda}V\right\Vert $ taken over
all nets $\left\{  P_{\lambda}\right\}  $ of projections in $\mathcal{B}$ for
which $\left\Vert AP_{\lambda}-P_{\lambda}A\right\Vert \rightarrow0$ for every
$A\in\mathcal{A}$ and $\lim_{\lambda}\left\Vert VP_{\lambda}-P_{\lambda
}V\right\Vert $ exists. Let $\mathcal{D}$ be a masa in $\mathcal{B}$ such that
$\mathcal{W}\subseteq\mathcal{D}$.

We first assume $T=T^{\ast}$. It follows from Lemma \ref{masa} that there is
an $S\in\mathcal{D}$ such that%
\[
\left\Vert S-T\right\Vert \leq2\sup\left\{  \left\Vert TP-PT\right\Vert
:P=P^{\ast}=P^{2}\in\mathcal{D}\right\}  \leq2\Delta\left(  T\right)  \text{.}%
\]
If we apply Theorem \ref{abelian}, we obtain a net $\left\{  P_{\lambda
}\right\}  $ of projections in $\mathcal{B}$ such that%

\[
\lim_{\lambda}\left\Vert WP_{\lambda}-P_{\lambda}W\right\Vert =0
\]
for every $W\in\mathcal{W}$, and such that
\[
\lim_{\lambda}\left\Vert P_{\lambda}S-SP_{\lambda}\right\Vert =dist\left(
S,\mathcal{W}\right)  .
\]
It follows that
\[
dist\left(  T,\mathcal{W}\right)  \leq dist\left(  S,\mathcal{W}\right)
+\left\Vert S-T\right\Vert \leq\Delta\left(  S\right)  +2\Delta\left(
T\right)  \leq
\]
and%
\[
\Delta\left(  S-T\right)  +\Delta\left(  T\right)  +2\Delta\left(  T\right)
\leq\left\Vert S-T\right\Vert +3\Delta\left(  T\right)  \leq5\Delta\left(
T\right)  .
\]

whenever $T=T^{\ast}.$

For the general case,%
\[
dist\left(  T,\mathcal{A}\right)  \leq dist\left(  \operatorname{Re}%
T,\mathcal{A}\right)  +dist\left(  \operatorname{Im}T,\mathcal{A}\right)  \leq
\]%
\[
5\Delta\left(  \operatorname{Re}T\right)  +5\Delta\left(  \operatorname{Im}%
T\right)  \leq5\left[  \frac{1}{2}\Delta\left(  T+T^{\ast}\right)  +\frac
{1}{2}\Delta\left(  T-T^{\ast}\right)  \right]  \leq
\]%
\[
5\left[  \Delta\left(  T\right)  +\Delta\left(  T^{\ast}\right)  \right]
=10\Delta\left(  T\right)  ,
\]
since $\Delta\left(  T\right)  =\Delta\left(  T^{\ast}\right)  $.
\end{proof}

\bigskip

\begin{corollary}
If $\mathcal{B}$ is a centrally prime C*-algebra with trivial center, e.g., a
factor von Neumann algebra or the Calkin algebra, then $\mathcal{A}%
=Appr\left(  \mathcal{A},\mathcal{B}\right)  ^{\prime\prime}$ for every
commutative unital C*-subalgebra $\mathcal{A}$ of $\mathcal{B}$.\bigskip
\end{corollary}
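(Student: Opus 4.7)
The plan is to deduce the corollary directly from the preceding theorem, which asserts that for any commutative unital C*-subalgebra $\mathcal{A}$ of a centrally prime unital C*-algebra $\mathcal{B}$,
\[
Appr\left(\mathcal{A},\mathcal{B}\right)^{\prime\prime} = C^{\ast}\left(\mathcal{A} \cup \mathcal{Z}\left(\mathcal{B}\right)\right).
\]
If $\mathcal{Z}(\mathcal{B}) = \mathbb{C}\cdot 1$, then since $\mathcal{A}$ is assumed unital, we have $\mathcal{Z}(\mathcal{B}) \subseteq \mathcal{A}$, and consequently $C^{\ast}(\mathcal{A} \cup \mathcal{Z}(\mathcal{B})) = \mathcal{A}$. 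So the identity $\mathcal{A} = Appr(\mathcal{A},\mathcal{B})^{\prime\prime}$ is immediate.

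The only thing remaining is to verify that the two classes of examples satisfy both hypotheses: centrally prime, and trivial center. A factor von Neumann algebra has trivial center by definition, and the paper has already noted that every von Neumann algebra is centrally prime, so this example is immediate. For the Calkin algebra $\mathcal{Q}(H) = B(H)/K(H)$ with $H$ separable and infinite-dimensional, one recalls that $\mathcal{Q}(H)$ is simple, which immediately gives trivial center, and simple C*-algebras are prime, hence centrally prime (prime implies centrally prime because the hypothesis $x\mathcal{B}y = \{0\}$ forces $x=0$ or $y=0$, in which case one may take $e = 0$ or $e = 1$).

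Since the corollary is a one-line consequence of the previous theorem together with a trivial observation about unital subalgebras, there is no real obstacle; the bulk of the work has already been done in Theorem \ref{abelian} and the theorem preceding the corollary. The only care needed is the remark that $\mathcal{A}$ being unital with the same unit as $\mathcal{B}$ is what lets the scalar center sit inside $\mathcal{A}$.
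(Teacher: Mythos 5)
Your proposal is correct and is exactly the argument the paper intends: the corollary follows immediately from the preceding theorem since a trivial center is absorbed by the unital subalgebra $\mathcal{A}$, giving $C^{\ast}\left(\mathcal{A}\cup\mathcal{Z}\left(\mathcal{B}\right)\right)=\mathcal{A}$. Your verification that the two named examples are centrally prime with trivial center is a reasonable extra check but matches what the paper already established.
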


In some cases our results yield information on relative double commutants.

\bigskip

\begin{theorem}
\label{dcAppl}Suppose $\left\{  \mathcal{B}_{n}\right\}  $ is a sequence of
primitive C*-algebras and $\mathcal{B}=%
{\displaystyle\prod_{n\geq1}}
\mathcal{B}_{n}/%
{\displaystyle\sum_{n\geq1}^{\oplus}}
\mathcal{B}_{n}$. If $\mathcal{A}$ is a separable unital C*-subalgebra of
$\mathcal{B}$, then%
\[
\left(  \mathcal{A},\mathcal{B}\right)  ^{\prime\prime}=C^{\ast}\left(
\mathcal{A}\cup\mathcal{Z}\left(  \mathcal{B}\right)  \right)  ,
\]
i.e., $C^{\ast}\left(  \mathcal{A}\cup\mathcal{Z}\left(  \mathcal{B}\right)
\right)  $ is normal.
\end{theorem}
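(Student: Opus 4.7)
The plan has two main ingredients. First I would verify that $\mathcal{B}$ is centrally prime (which the excerpt anticipates). Then I would use separability of $\mathcal{A}$ together with the sequential structure of $\mathcal{B}=\prod\mathcal{B}_n/\sum^{\oplus}\mathcal{B}_n$ to collapse the exact relative double commutant onto the approximate one, after which the earlier approximate double commutant theorem delivers the conclusion.

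For central primeness, take $0\leq x,y\leq 1$ in $\mathcal{B}$ with $x\mathcal{B}y=\{0\}$ and lift to $(x_m),(y_m)\in\prod\mathcal{B}_m$. The vanishing condition is equivalent to $\sup_{\|b\|\leq 1}\|x_mby_m\|\rightarrow 0$. Kadison's transitivity theorem, applied to a faithful irreducible representation of the primitive $\mathcal{B}_m$ on some $H_m$, forces this supremum to equal $\|x_m\|\|y_m\|$, so $\|x_m\|\|y_m\|\rightarrow 0$. Setting $I=\{m:\|x_m\|\geq\|y_m\|\}$ and using that $\mathcal{Z}(\mathcal{B}_m)=\mathbb{C}\cdot 1$ for primitive $\mathcal{B}_m$, the image $e\in\mathcal{Z}(\mathcal{B})$ of the central sequence $(\chi_I(m)\cdot 1_{\mathcal{B}_m})_m$ is a projection satisfying $x\leq e\leq 1$ and $y\leq 1-e\leq 1$.

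Next comes a diagonal argument showing $(\mathcal{A},\mathcal{B})''=\mathrm{Appr}(\mathcal{A},\mathcal{B})''$. Fix a countable dense set $\{a_i\}\subseteq\mathcal{A}$ and $T\in(\mathcal{A},\mathcal{B})''$, with lifts $T=[(T_m)]$ and $a_i=[(a_{i,m})]$. Given a bounded sequence $\{X_n\}\subseteq\mathcal{B}$ with $\|[X_n,a_i]\|\rightarrow 0$ for each $i$, suppose for contradiction that $\|[T,X_n]\|\geq\varepsilon$ along an infinite subsequence; writing $X_n=[(X_n^{(m)})]$, pick $m_k$ strictly increasing so that $\|[a_{i,m_k},X_k^{(m_k)}]\|<1/k$ for $i\leq k$ and $\|[T_{m_k},X_k^{(m_k)}]\|\geq\varepsilon-1/k$ (both are possible since norms in $\mathcal{B}$ are $\limsup_m$ of coordinate norms). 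Then $Y=[(Y_p)]$ defined by $Y_{m_k}=X_k^{(m_k)}$ and $Y_p=0$ otherwise satisfies $\|[a_i,Y]\|=0$ in $\mathcal{B}$ (so $Y\in(\mathcal{A},\mathcal{B})'$) yet $\|[T,Y]\|\geq\varepsilon$, contradicting $T\in(\mathcal{A},\mathcal{B})''$. Cofinal extraction reduces arbitrary bounded nets to sequences.

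To finish, for commutative $\mathcal{A}$ the preceding approximate double commutant theorem gives $\mathrm{Appr}(\mathcal{A},\mathcal{B})''=C^{\ast}(\mathcal{A}\cup\mathcal{Z}(\mathcal{B}))$ directly. For general separable noncommutative $\mathcal{A}$ the same identity must be established, and this is the main obstacle: Lemma \ref{machado} (via Machado's theorem) is stated only for commutative ambients. The strategy is to adapt Theorem \ref{abelian} component-wise --- using Kadison transitivity inside each primitive $\mathcal{B}_m$ together with the central quotient structure $\mathcal{Z}(\mathcal{B})\supseteq\ell^{\infty}/c_0$ --- to construct, for any self-adjoint $T\notin C^{\ast}(\mathcal{A}\cup\mathcal{Z}(\mathcal{B}))$, a partial isometry in $\mathcal{B}$ exactly commuting with each $a_i$ but whose commutator norm with $T$ is at least $2\,\mathrm{dist}(T,C^{\ast}(\mathcal{A}\cup\mathcal{Z}(\mathcal{B})))$; combined with the diagonal step, this yields the theorem.
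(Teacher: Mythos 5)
Your proposal follows essentially the same route as the paper: central primeness of $\mathcal{B}$ is obtained by lifting and applying Kadison's transitivity theorem coordinatewise in each primitive $\mathcal{B}_m$ (the paper settles for the estimate $\sup_{\|b\|\leq1}\|x_m b y_m\|\geq\tfrac14\|x_m\|\|y_m\|$ rather than equality, which suffices) together with the indicator-sequence central projection, then a diagonal lifting argument shows $\left(\mathcal{S},\mathcal{B}\right)''=\mathrm{Appr}\left(\mathcal{S},\mathcal{B}\right)''$ for norm-separable $\mathcal{S}$, and finally the relative approximate double commutant theorem for commutative subalgebras of centrally prime algebras is invoked. The concern in your last paragraph about noncommutative $\mathcal{A}$ is legitimate but applies equally to the paper's own proof, which at that point simply cites the commutative theorem; the statement is evidently meant to carry a commutativity hypothesis on $\mathcal{A}$ (as everywhere else in the paper), and under that hypothesis your argument is complete and coincides with the paper's.
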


\begin{proof}
We first show that $\mathcal{B}$ is centrally prime. Since each $\mathcal{B}%
_{n}$ is primitive, we can assume, for each $n\in\mathbb{N}$, that there is a
Hilbert space $H_{n}$ such that $\mathcal{B}_{n}$ is an irreducible unital
C*-subalgebra of $B\left(  H_{n}\right)  $. Suppose $A,B\in\mathcal{B}$,
$0\leq A,B\leq1$ and $A\mathcal{B}B=0$. We can lift $A,B,$ respectively to a
sequences $\left\{  A_{n}\right\}  ,\left\{  B_{n}\right\}  $ in $%
{\displaystyle\prod_{n\geq1}}
\mathcal{B}_{n}$. Hence, for every bounded sequence $\left\{  T_{n}\right\}
\in%
{\displaystyle\prod_{n\geq1}}
\mathcal{B}_{n}$, we have%
\[
\lim_{n\rightarrow\infty}\left\Vert A_{n}T_{n}B_{n}\right\Vert =0.
\]
Choose unit vectors $e_{n},f_{n}\in H_{n}$ so that $\left\Vert A_{n}%
e_{n}\right\Vert \geq\left\Vert A_{n}\right\Vert /2$ and $\left\Vert
B_{n}f_{n}\right\Vert \geq\left\Vert B_{n}\right\Vert /2$. It follwos from the
irreducibility of $\mathcal{B}_{n}$ and Kadison's transitivity theorem
\cite{KR} that there is a $T_{n}\in\mathcal{B}_{n}$ such that $\left\Vert
T_{n}\right\Vert =1$ and $T_{n}B_{n}f_{n}=\left\Vert B_{n}f_{n}\right\Vert
e_{n}$. It follows that
\[
0=\lim_{n\rightarrow\infty}\left\Vert A_{n}T_{n}B_{n}\right\Vert \geq
\lim_{n\rightarrow\infty}\left\Vert A_{n}T_{n}B_{n}f_{n}\right\Vert \geq
\lim\frac{1}{4}\left\Vert A_{n}\right\Vert \left\Vert B_{n}\right\Vert .
\]
Hence%
\[
\lim_{n\rightarrow\infty}\min\left(  \left\Vert A_{n}\right\Vert ,\left\Vert
B_{n}\right\Vert \right)  ^{2}\leq\lim_{n\rightarrow\infty}\left\Vert
A_{n}\right\Vert \left\Vert B_{n}\right\Vert =0.
\]
For each $n\in\mathbb{N}$ we define%
\[
P_{n}=\left\{
\begin{array}
[c]{cc}%
1 & \text{if }\left\Vert B_{n}\right\Vert \leq\left\Vert A_{n}\right\Vert \\
0 & \text{if }\left\Vert A_{n}\right\Vert <\left\Vert B_{n}\right\Vert
\end{array}
\right.  .
\]
Then $\left\{  P_{n}\right\}  $ is in the center of $%
{\displaystyle\prod_{n\geq1}}
\mathcal{B}_{n}$ and
\[
\lim_{n\rightarrow\infty}\left\Vert P_{n}B_{n}\right\Vert =\lim_{n\rightarrow
\infty}\left\Vert \left(  1-P_{n}\right)  A_{n}\right\Vert =0.
\]
If $P$ is the image of $\left\{  P_{n}\right\}  $ in the quotient
$\mathcal{B}$, then $P$ is a central projection and $PA=P$ and $\left(
1-P\right)  B=B$. Hence $\mathcal{B}$ is centrally prime. So it follows that
\[
appr\left(  \mathcal{A},\mathcal{B}\right)  ^{\prime\prime}=C^{\ast}\left(
\mathcal{A}\cup\mathcal{Z}\left(  \mathcal{B}\right)  \right)  .
\]

The proof will be completed with proof of the following claim: If
$\mathcal{S}$ is a norm-separable subset of $\mathcal{B}$, then
\[
appr\left(  \mathcal{S},\mathcal{B}\right)  ^{\prime\prime}=\left(
\mathcal{S},\mathcal{B}\right)  ^{\prime\prime}.
\]
It is clear from considering constant sequences that the inclusion
$appr\left(  \mathcal{S},\mathcal{B}\right)  ^{\prime\prime}\subseteq\left(
\mathcal{S},\mathcal{B}\right)  ^{\prime\prime}$ holds for every unital
C*-algebra $\mathcal{B}$. To prove the reverse inclusion, suppose $T\notin
appr\left(  \mathcal{S},\mathcal{B}\right)  ^{\prime\prime}$. Then there is
and $\varepsilon>0$ and a net $\left\{  A_{\lambda}\right\}  $ in
$\mathcal{B}$ such that $\left\Vert A_{\lambda}S-SA_{\lambda}\right\Vert
\rightarrow0$ for every $S\in\mathcal{S}$, and such that $\left\Vert
A_{\lambda}T-TA_{\lambda}\right\Vert \geq\varepsilon$ for every $\lambda$. Let
$\mathcal{S}_{0}=\left\{  S_{1},S_{2},\ldots\right\}  $ be a dense subset of
$\mathcal{S}$. We can lift each $S_{n}$ to $\left\{  S_{n}\left(  j\right)
\right\}  _{j\geq1}\in%
{\displaystyle\prod_{k\geq1}}
\mathcal{B}_{k}$ and lift $T$ to $\left\{  T\left(  j\right)  \right\}
_{j\geq1}$. It follows that, for every $n\in\mathbb{N}$, there is an $A_{n}%
\in\mathcal{B}$ with $\left\Vert A_{n}\right\Vert =1$ such that

\begin{enumerate}
\item[a.] $\left\Vert A_{n}S_{k}-S_{k}A_{n}\right\Vert <1/n$ for $1\leq k\leq
n,$

\item[b.] $\left\Vert A_{n}T-TA_{n}\right\Vert >\varepsilon/2$.
\end{enumerate}

Note that if $B\in\mathcal{B}$ lifts to $\left\{  B\left(  j\right)  \right\}
_{j\geq1}\in%
{\displaystyle\prod_{k\geq1}}
\mathcal{B}_{k}$, then $\left\Vert B\right\Vert =\limsup_{j\rightarrow\infty
}\left\Vert B\left(  j\right)  \right\Vert $. If we lift each $A_{n}$ to
$\left\{  A_{n}\left(  j\right)  \right\}  ,$ it follows that we can find an
arbitrarily large $j_{n}\in\mathbb{N}$ such that $\left\Vert A_{n}\left(
j_{n}\right)  S_{k}\left(  j_{n}\right)  -S_{k}\left(  j_{n}\right)
A_{n}\left(  j_{n}\right)  \right\Vert <1/n$ for $1\leq k\leq n$ and
$\left\Vert A_{n}\left(  j_{n}\right)  T\left(  j_{n}\right)  -T\left(
j_{n}\right)  A_{n}\left(  j_{n}\right)  \right\Vert >\varepsilon/2$. Since
$j_{n}$ can be chosen to be arbitrarily large, we can choose $\left\{
j_{n}\right\}  $ so that $j_{1}<j_{2}<\cdots$. We now define $A\in\mathcal{B}$
by defining%
\[
A\left(  j\right)  =\left\{
\begin{array}
[c]{cc}%
A_{n}\left(  j_{n}\right)  & \text{if }j=j_{n}\text{ for some }n\geq1\\
0 & \text{otherwise}%
\end{array}
\right.  .
\]
We see that $AS_{k}=S_{k}A$ for all $k\geq1$ and $\left\Vert AT-TA\right\Vert
\geq\varepsilon/2$. Hence $T\notin\left(  \mathcal{S},\mathcal{B}\right)
^{\prime\prime}$.
\end{proof}

\bigskip

We conclude with some questions.

\begin{enumerate}
\item If $\mathcal{M}$ is a normal von Neumann subalgebra of a factor von
Neumann algebra $\mathcal{B}$, is there a constant $K\geq1$ such that, for
every $T\in\mathcal{B},$%
\[
dist\left(  T,\mathcal{M}\right)  \leq K\sup\left\{  \left\Vert
TP-PT\right\Vert :P=P^{2}=P^{\ast}\in\mathcal{M}^{\prime}\cap\mathcal{B}%
\right\}  ?
\]
When $\mathcal{B}=B\left(  H\right)  ,$ this question is equivalent to
Kadison's similarity problem. What about factors not of type $I$?

\item Is there an analog of Theorem \ref{main} for arbitrary C*-subalgebras of
a factor von Neumann algebra?

\item It seems likely that a version of parts $\left(  4\right)  $ and
$\left(  5\right)  $ of Theorem \ref{abelian} might hold under assumptions
weaker than $\mathcal{B}$ being a von Neumann algebra. Is it true when
$\mathcal{B}$ has real-rank zero? What if we include nuclear and simple? The
key is getting the partial isometries $V_{\lambda}$ in the proof of Theorem
\ref{abelian}. When does a unital C*-algebra $\mathcal{B}$ have the property
that whenever $X,Y,A,B\geq0$ are in $\mathcal{B}$ and $AX=X,$ $BY=Y,$ $AB=0$
and $X\mathcal{B}Y\neq\left\{  0\right\}  $, there is a nonzero partial
isometry $V\in\mathcal{B}$ such that $AV=VB=V$?
\end{enumerate}

\end{document}